\title{Shadowing, average shadowing and transitive properties of multiple mappings}
\author{Yingcui Zhao}
\begin{document}
\newtheorem{theorem}{Theorem}[section]
\newtheorem{corollary}[theorem]{Corollary}
\newtheorem{lemma}[theorem]{Lemma}
\newtheorem{proposition}[theorem]{Proposition}
\newtheorem{problem}[theorem]{Problem}
\newtheorem{maintheorem}[theorem]{Main Theorem}
\newtheorem{definition}[theorem]{Definition}
\newtheorem{remark}[theorem]{Remark}
\newtheorem{example}[theorem]{Example}
\newtheorem{claim}{Claim}[section]
\maketitle

\begin{abstract}
In 2016, Hou and Wang\cite{HouWang2016} introduced the concept of multiple mappings based on iterated function system, which is an important branch of fractal theory. In this paper, we introduce the definitions of shadowing, average shadowing, transitive, weakly mixing, mixing, chain transitive and chain mixing properties of multiple mappings from a set-valued perspective. We show that both two continuous self-maps have shadowing (respectively, average shadowing, transitive, weakly mixing, mixing) property may not imply the multiple mappings they form has the corresponding property. While a sufficient condition for multiple mappings to be transitive (respective, weakly mixing, mixing, chain transitive, chain mixing) is given. Both shadowing and average shadowing properties are invariant under the iterative action of multiple mappings. Also, we study that for multiple mappings shadowing property plus chain mixing implies mixing and average shadowing properties implies chain transitivity.
\end{abstract}

\section{Introduction}
A dynamical system typically refers to the pair $(X,f)$, in which $X$ is a compact metric space with a metric $d$ and $f$ is a continuous self-map on $X$. Shadowing, average shadowing and transitive properties are all very important concepts in dynamical systems. They are closely related to the stability and chaotic behavior of a dynamical system. By studying these properties of a dynamical system, we can understand its connectivity and transitive properties, revealing the interrelations and interactions within the system. They can also be applied to various practical problems, such as traffic flow, financial markets, ecological systems, and more. By analyzing them, it can help understand and predict the behavior of these complex systems, providing guidance and methods for solving practical problems. 

Let $\mathbb{N}=\{0,1,2,\cdots\}$, $\mathbb{Z^+}=\{1,2,3,\cdots\}$. 
A sequence $\{x_n\}_{n\geq0}\subset X$  is said to be \emph{a $\delta$-pseudo orbit of $f$}, if for any $n\in\mathbb{N}$, $d(f(x_n),x_{n+1})\leq\delta$ (when $n$ is a finite number, we call $\{x_n\}_{n\geq0}\subset X$ as \emph{a $\delta$-chain}). We say $f$ has \emph{shadowing property}, if for any $\epsilon>0$ there exists $\delta>0$ such that every $\delta$-pseudo orbit of $f$ is $\epsilon$-shadowed by the orbit $orb(y,f):=\{f^n(y)\mid  n\in\mathbb{N}\}$ of some point $y$ in $X$ under $f$, i.e., for any $n\in\mathbb{N}$, $d(f^n(y),x_n)<\epsilon$.

A sequence $\{x_n\}_{n\geq0}\subset X$ is said to be \emph{a $\delta$-average-pseudo-orbit of $f$}, if there is $N(\delta)>0$ such that for any $n\geq N(\delta)$ and any $k\in\mathbb{Z^+}$, $$\frac{1}{n}\sum^{n-1}_{i=0}d(f(x_{i+k}),x_{i+k+1})<\delta.$$
We say $f$ has \emph{average shadowing property}, if for any there exists $\delta>0$ such that every $\delta$-avarage-pseudo-orbit of $f$ is $\epsilon$-average-shadowed by the orbit $orb(y,f):=\{f^n(y)\mid  n\in\mathbb{N}\}$ of some point $y$ in $X$ under $f$, i.e., $$\limsup_{n\rightarrow\infty}\frac{1}{n}\sum^{n-1}_{i=0}d(f^i(y),x_i)<\epsilon.$$

We say $f$ is
\begin{enumerate}	
\item[(1)] \emph{transitive}, if for any nonempty open sets $U,V\subset X$, there exists $n\in\mathbb{Z^+}$ such that $f^n(U)\bigcap V\neq\emptyset$.

\item[(2)]\emph{weakly mixing}, if for any nonempty open sets $U_1,V_1,U_2,V_2\subset X$, there exists $n\in\mathbb{Z^+}$ such that $f^n(U_1)\bigcap V_1\neq\emptyset$ and $f^n(U_2)\bigcap V_2\neq\emptyset$.

\item[(2)]\emph{mixing}, if for any nonempty open sets $U,V\subset X$, there is $N\in\mathbb{Z^+}$ such that for any $n\geq N$, $f^n(U)\bigcap V\neq\emptyset$.

\item[(3)]\emph{chain transitive}, if for any $x,y\in X$ and $\delta>0$ there is a $\delta$-chain from $x$ to $y$.

\item[(4)]\emph{chain mixing}, if for any $x,y\in X$ and $\delta>0$ there is $N\in\mathbb{Z^+}$ such that for any $n\geq N$, one can find a $\delta$-chain from $x$ to $y$ of length $n$.
\end{enumerate}

In 2016, Hou and Wang\cite{HouWang2016} defined multiple mappings derived from iterated function system. Iterated function system is an important branch of fractal theory, reflecting the essence of the world. They are one of the three frontiers of nonlinear science theory. Hou and Wang's focus was primarily on studying the Hausdorff metric entropy and Friedland entropy of multiple mappings. Additionally, they introduced the notions of Hausdorff metric Li-Yorke chaos and Hausdorff metric distributional chaos from a set-valued perspective. It is worth noting that researchers studying iterated function systems often approach the topic from a group perspective rather than a set-valued perspective. This also establishes a close connection between multiple mappings and set-valued mappings, or we can consider multiple mappings as a special case of set-valued mappings.

It is important to acknowledge the valuable role of set-valued mappings in addressing complex problems involving uncertainty, ambiguity, or multiple criteria. Set-valued mappings offer versatility and flexibility, making them highly beneficial across various fields. One prominent application of set-valued mappings is in optimization problems, where the objective is to identify the optimal set of solutions. For instance, in multi-objective optimization, a set-valued mapping can represent the Pareto front, encompassing all non-dominated solutions.

Set-valued mappings also prove useful in decision-making processes that require considering multiple criteria or preferences. By representing feasible solutions as sets, decision-makers can thoroughly analyze and compare different options, enabling them to make well-informed decisions. Additionally, set-valued mappings find applications in data analysis tasks such as clustering and classification. Unlike assigning each data point to a single category, set-valued mappings can represent uncertainty or ambiguity by assigning data points to multiple categories. In fact, the applications of set-valued mappings are vast and diverse, encompassing numerous fields beyond those mentioned here.

In \cite{ziji}, we studied that if multiple mappings $F$ has a disdributionally chaotic
pair, especially $F$ is distributionally chaotic, $S\Omega(F)$
contains at least two points and gives a sufficient condition for
$F$ to be distributionally chaotic in a sequence and chaotic in the
strong sense of Li-Yorke. Zeng et al. \cite{zeng2020} proved two topological conjugacy dynamical systems to multiple mappings have simultaneously Hausdorff metric Li-Yorke chaos or Hausdorff distributional chaos and the multiple mappings $F=\{f_1,f_2\}$ and its $2$-tuple of continuous self-maps $f_1,f_2$ are not mutually implied in terms of Hausdorff metric Li-Yorke chaos.

The current paper aims to consider the image of one point under multiple mappings as a set (a compact set). We primarily consider the shadowing, average shadowing and transitive properties of multiple mappings. The specific layout of the present paper is
as follows. Some preliminaries and definitions are introduced in Section \ref{sec2}.
Then we study the relation between multiple mappings and its continuous self-maps in terms of shadowing, average shadowing and transitive properties in Section \ref{sec3}, the iterability of shadowing and average shadowing properties in Section \ref{sec4}, and the relationships among transitive, weakly mixing, mixing, chain transitive and chain mixing in Section \ref{sec5}. At last, we summarize and state the conclusions of this paper in Section \ref{sec6}.

\section{Preliminaries}\label{sec2}
Let $F=\{f_1,f_2,\cdots,f_n\}$ be a \emph{multiple mappings} with $n$-tuple of continuous self-maps on $X$. Note that for any $x\in X$, $F(x)=\{f_1(x),f_2(x),\cdots,f_n(x)\}\subset X$ is compact. Let

$$\mathbb{K}(X)=\{K\subset X\mid  K is nonempty and compact\}.$$ 
Then $F$ is from $X$ to $\mathbb{K}(X)$. The Hausdorff metric $d_H$ on $\mathbb{K}(X)$ is defined by $$d_H(A,B)=\max\{\sup_{a\in A}\inf_{b\in B}d(a,b),\sup_{b\in B}\inf_{a\in A}d(a,b)\},\forall A,B\subset X.$$ It is clear that $\mathbb{K}(X)$ is a compact metric space with the Hausdorff metric $d_H$.

For convenience, let us study multiple mappings on compact metric spaces from a set-valued view, using the example of examining two continuous self-maps. The definitions and conclusions presented in this paper can be easily extended to the case of a multiple mappings formed by any finite number of continuous self-maps. Consider the multiple mappings $F=\{f_1,f_2\}$. For any $n>0$, $F^n:X\rightarrow\mathbb{K}(X)$ is defined by for  any $x\in X$,$$F^n(x)=\{f_{i_1}f_{i_2}\cdots f_{i_n}(x)\mid  i_1,i_2,\cdots,i_n=1\text{ or }2\}.$$
Particularly, if $A\in\mathbb{K}(X)$, $F^n(A)\in\mathbb{K}(X)$. Then, $F$ naturally induces a continuous self-map on $\mathbb{K}(X)$, denoted by $\widetilde{F}:\mathbb{K}(X)\rightarrow\mathbb{K}(X)$.

Throughout this paper, 	Suppose that $F=\{f_1,f_2\}$ be a multiple mappings with $2$-tuple of continuous self-maps on $X$. Now, we define the concept of shadowing property of multiple mappings $F=\{f_1,f_2\}$ from a set-valued view. 


\begin{definition}\label{pfp}
A sequence $\{A_n\}_{n\geq0}\subset \mathbb{K}(X)$ with $A_0$ being a singleton set is said to be a $\delta$-pseudo orbit of $F$, if for any $n\in\mathbb{N}$, $d_H(F(A_n),A_{n+1})\leq\delta$. We say $F$ has (Hausdorff metric) shadowing property, if for any $\epsilon>0$ there exists $\delta>0$ such that every $\delta$-pseudo orbit of $F$ is $\epsilon$-shadowed by the orbit $orb(y,F):=\{F^n(y)\mid  n\in\mathbb{N}\}$ of some point $y$ in $X$ under $F$, i.e., for any $n\in\mathbb{N}$, $d_H(F^n(y),A_n)<\epsilon$.
\end{definition}

\begin{definition}\label{apfp}
A sequence $\{A_n\}_{n\geq0}\subset \mathbb{K}(X)$ with $A_0$ being a singleton set is said to be a $\delta$-average-pseudo orbit of $F$, if for any $n\in\mathbb{N}$, $d_H(F(A_n),A_{n+1})\leq\delta$. We say $F$ has (Hausdorff metric) shadowing property, if there is $N(\delta)>0$ such that for any $n\geq N(\delta)$ and any $k\in\mathbb{Z^+}$, $$\frac{1}{n}\sum^{n-1}_{i=0}d_H(F(A_{i+k}),A_{i+k+1})<\delta.$$
We say $F$ has (Hausdorff metric) average shadowing property, if for any there exists $\delta>0$ such that every $\delta$-avarage-pseudo-orbit of $F$ is $\epsilon$-average-shadowed by the orbit $orb(y,F):=\{F^n(y)\mid  n\in\mathbb{N}\}$ of some point $y$ in $X$ under $F$, i.e., $$\limsup_{n\rightarrow\infty}\frac{1}{n}\sum^{n-1}_{i=0}d_H(F^i(y),A_i)<\epsilon.$$
\end{definition}

Let $ Ran(F)=\{F^n(x)\mid  n\geq 1,x\in X\}$.
\begin{definition}\label{newde2}
We say $F$ is
\begin{enumerate}	
	\item[(1)] \emph{transitive}, if for any nonempty open sets $U\subset X$ and $\mathcal{U}\subset Ran(F)$, there exists $n\in\mathbb{Z^+}$ such that $$\{F^n(u)\mid  u\in U\}\bigcap\mathcal{U}\neq\emptyset.$$
	
	\item[(2)]\emph{weakly mixing}, if for any nonempty open sets $U,V\subset X$ and $\mathcal{U},\mathcal{V}\subset Ran(F)$, there exists $n\in\mathbb{Z^+}$ such that $$\{F^n(u)\mid  u\in U\}\bigcap\mathcal{U}\neq\emptyset~and~\{F^n(v)\mid  v\in V\}\bigcap\mathcal{V}\neq\emptyset.$$
	
	\item[(2)]\emph{mixing}, if for any nonempty open sets $U\subset X$ and $\mathcal{U}\subset Ran(F)$,  there is $N\in\mathbb{Z^+}$ such that for any $n\geq N$, $$\{F^n(u)\mid  u\in U\}\bigcap\mathcal{U}\neq\emptyset.$$
	
	\item[(3)]\emph{chain transitive}, if for any $x\in X$, $A\in Ran(F)$ and $\delta>0$ there is a $\delta$-chain from $x$ to $A$.
	
	\item[(4)]\emph{chain mixing}, if for any $x\in X$, $A\in Ran(F)$ and $\delta>0$ there is $N\in\mathbb{Z^+}$ such that for any $n\geq N$, one can find a $\delta$-chain from $x$ to $A$ of length $n$.
\end{enumerate}
\end{definition}
Clearly, the Hausdorff metric shadowing, average shadowing and transitive properties of multiple mappings, in the case of degradation (where the multiple mappings consists of only one continuous self-map), is the same as the shadowing, average shadowing and transitive properties of a classical single continuous self-map. Next we provide an example to illustrate the existence of the newly defined concept Definition \ref{pfp}, Definition \ref{apfp} and Definition \ref{newde2}. 

\begin{example}\label{eab}
Consider the multiple map defined on a compact space $X$ as $F=\{f_1,f_2\}$, in which both $f_1$ and $f_2$ are constant functions. Let $f_1(x)\equiv a$ and $f_2(x)\equiv b$. It is can be verified that for any $x\in X$ and any $n\in\mathbb{N}$, $F^n(x)=\{a,b\}$. Let $\epsilon>0$ and $\delta=\frac{\epsilon}{2}>0$. Then for every $\delta$-pseudo orbit $\{A_n\}_{n\geq0}$ of $F$, $$d_H(\{a,b\},A_{n})\leq\delta, \forall n\in\mathbb{Z^+}.$$ Thus for any $y\in X$ and any $n\in\mathbb{N}$, $$d_H(F^n(y),A_n)\leq\delta<\epsilon.$$
So, $F$ has shadowing and average shadowing properties. 
\end{example}

\begin{example}\label{e1}
Consider the multiple map defined on $[0,1]$ as $F=\{f_1,f_2\}$, in which $f_1(0)=f_1(1)=0$ and $f_2(0)=f_2(1)=1$. Then $Ran(F)=\{\{0,1\}\}$ and $F(0)=\{0,1\}=F(1)$. So, $F$ possesses all the topological properties defined in Definition \ref{newde2}.
\end{example}

%
\section{Relation Between $F$ and $f_1,f_2$}\label{sec3}
A natural question is what is the implication between the shadowing and average shadowing properties of multiple mappings $F=\{f_1,f_2\}$ and the shadowing and average shadowing properties of its $2$-tuple of continuous self-maps $f_1,f_2$? 
The next example show that both $f_1$ and $f_2$ have shadowing (respectively, average shadowing) property may not imply the shadowing (respectively, average shadowing) property of $F$.
\begin{example}
Consider the multiple mappings defined on $[0,1]$ as $F=\{f_1,f_2\}$, in which $f_1(x)=0,\forall x\in[0,1]$ and 	
\begin{equation*}
	f_2(x)=\begin{cases}
		2x, & 0\leq x\leq\frac{1}{2}, \\
		2-2x, & \frac{1}{2}<x\leq 1.
	\end{cases}
\end{equation*}
\begin{itemize}
	\item [(1)]As we all know, both $f_1$ and $f_2$ have shadowing and average shadowing property.
	\item[(2)]Let $\epsilon=\frac{1}{49}>0$ and $\delta>0$. 
	Then there exists $\delta$-pseudo orbit $$\{\{\frac{1}{7}\},\{\frac{2}{7},\delta\},\{\frac{4}{7},\delta,3\delta\},\{\frac{6}{7},\delta,3\delta,7\delta\},\{\frac{2}{7},\delta,3\delta,7\delta,15\delta\},\cdots,A,\cdots\},$$
	in which $A\subset [0,1]$ with $a\in A$ satisfying $a>\frac{1}{2}$. Clearly, it is also a $\delta$-average-pseudo-orbit of $F$. While, for any $z\in [0,1]$ and any $n>0$, $d_H(F^n(z),A)=d_H(\{0,f_2^n(z)\},A)>\epsilon$. So, $F$ doesn't have shadowing or average shadowing property.
\end{itemize}
\end{example}

\begin{theorem}
If $f_1(x)=c$ ($\forall x\in X$, $c$ is a constant) , $f_2(c)=c$, and $F$ has shadowing (respectively, average shadowing) property, then $f_2$ has shadowing (respectively, average shadowing) property.
\end{theorem}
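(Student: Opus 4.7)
The key observation is that the hypotheses $f_1\equiv c$ and $f_2(c)=c$ make the iterates of $F$ collapse drastically. In any composition $f_{i_1}\cdots f_{i_n}(x)$, if some $i_j=1$ then every later application of $f_1$ or $f_2$ keeps the value at $c$ (because $f_1$ outputs $c$ and $f_2$ fixes $c$). Hence for every $x\in X$ and every $n\geq 1$ one has $F^n(x)=\{c,f_2^n(x)\}$. This identity will let me transfer a pseudo orbit of $f_2$ to a pseudo orbit of $F$ essentially verbatim.

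Given $\epsilon>0$, the plan is to invoke the (average) shadowing of $F$ with the smaller tolerance $\epsilon/2$ to produce a $\delta>0$. Take any $\delta$-pseudo orbit (resp.\ $\delta$-average-pseudo orbit) $\{x_n\}_{n\geq 0}\subset X$ of $f_2$, and define $A_0=\{x_0\}$, $A_n=\{c,x_n\}$ for $n\geq 1$. Then $F(A_0)=\{c,f_2(x_0)\}$ and $F(A_n)=\{c,f_2(x_n)\}$ for $n\geq 1$, so $d_H(F(A_n),A_{n+1})=d(f_2(x_n),x_{n+1})$. Thus $\{A_n\}$ is automatically a $\delta$-pseudo (resp.\ $\delta$-average-pseudo) orbit of $F$ starting from a singleton, as required by Definition \ref{pfp} / Definition \ref{apfp}.

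Applying the shadowing hypothesis yields $y\in X$ with $d_H(F^n(y),A_n)<\epsilon/2$ for all $n$. The remaining step is to extract $d(f_2^n(y),x_n)<\epsilon$ from $d_H\bigl(\{c,f_2^n(y)\},\{c,x_n\}\bigr)<\epsilon/2$. Because both sets contain the common point $c$, the Hausdorff bound does not directly give a pointwise bound on $d(f_2^n(y),x_n)$: the point $f_2^n(y)$ may be close to $c$ instead of close to $x_n$, and symmetrically for $x_n$. A short case analysis unpacking the two $\sup\inf$ terms of $d_H$ shows that in every branch $d(f_2^n(y),x_n)\leq 2\,d_H(F^n(y),A_n)$, the worst case coming from the triangle inequality $d(f_2^n(y),x_n)\leq d(f_2^n(y),c)+d(c,x_n)$ when both sides happen to be near $c$. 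Hence $d(f_2^n(y),x_n)<\epsilon$, proving shadowing of $f_2$.

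For average shadowing the same construction applies, and the inequality $d(f_2^n(y),x_n)\leq 2\,d_H(F^n(y),A_n)$ is plugged into the Ces\`aro average: $\limsup_n \frac1n\sum_{i=0}^{n-1} d(f_2^i(y),x_i)\leq 2\limsup_n \frac1n\sum_{i=0}^{n-1} d_H(F^i(y),A_i)<2\cdot(\epsilon/2)=\epsilon$. The only real obstacle is this factor-of-two loss caused by the parasitic point $c$ shared by $F^n(y)$ and $A_n$; budgeting for it at the very start (by starting from $\epsilon/2$ instead of $\epsilon$) takes care of the whole issue, and the rest of the argument is just bookkeeping.
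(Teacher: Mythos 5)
Your proposal is correct and follows essentially the same route as the paper: form the pseudo orbit $\{c,x_n\}$ of $F$, invoke shadowing of $F$ at tolerance $\epsilon/2$, and convert the Hausdorff bound back to a pointwise bound. You are in fact slightly more careful than the paper, which silently asserts $d(x_n,f_2^n(z))<\epsilon$ from $d_H(\{c,x_n\},\{c,f_2^n(z)\})<\epsilon/2$ without the case analysis you supply, and which does not address the requirement that $A_0$ be a singleton.
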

\begin{proof}
For any $x\in X$ and any $n>$, $F^n(x)=\{c,f_2^n(x)\}$. Let $\epsilon>0$ and $\delta>0$ be such that $F$ has shadowing property witnessed by $\frac{\epsilon}{2}$. Let $\{x_i\}_{i\geq 0}$ be a $\delta$-pseudo orbit of $f_2$. Then $\{c,x_i\}_{i\geq 0}$ is a $\delta$-pseudo orbit of $F$. Thus there exists $z\in X$ such that $$d_H(\{c,x_n\},\{c,f_2^n(z)\})<\frac{\epsilon}{2}.$$ Then $d(x_n,f_2^n(z))<\epsilon$. So, $f_2$ has shadowing property.

The proof regarding the average shadowing property is similar.
\end{proof}

The Example \ref{e1} shows that the transitivity / weakly mixing / mixing / chain transitivity / chain mixing of multiple mappings $F=\{f_1,f_2\}$ doesn't imply the  transitivity / weakly mixing / mixing / chain transitivity / chain mixing of $f_1$ or $f_2$. Then combined with the following example, the transitivity / weakly mixing / chain transitivity of multiple mappings $F=\{f_1,f_2\}$ and its $2$-tuple of continuous self-maps $f_1,f_2$ do not imply each other.
\begin{example}
Consider the multiple mappings  $F=\{f_1,f_2\}$ defined on $\{0,1,2\}$, in which $f_1:0\longmapsto1\longmapsto2\longmapsto0$ and $f_2:0\longmapsto2\longmapsto1\longmapsto0$. It is easy to see that both $f_1$ and $f_2$ is transitive, weakly mixing and chain transitive. Next we show $F$ is not chain transitive.

$Ran(F)=\{\{1,2\},\{0,2\},\{0,1\},\{0,1,2\}\}$. Let $x=0$ and $A=\{0,2\}\in Ran(F)$. While $F:0\mapsto\{1,2\}\mapsto\{0,1,2\}\mapsto\cdots\mapsto\{0,1,2\}\mapsto\cdots$. So, $F$ is not chain  transitive. Then $F$ is not transitive or weakly mixing.
\end{example}

%

Although that both $f_1$ and $f_2$ are transitive / weakly mixing / chain transitive can't imply $F=\{f_1,f_2\}$ is transitive / weakly mixing / chain transitive, now we provide a sufficient condition for $F$ to be transitive / weakly mixing / chain transitive as Theorem \ref{thFtra}, also containing a sufficient condition for $F$ to be chain nixing / mixing.  
\begin{theorem}\label{thFtra}
If $f_1$ is a constant function and $f_2$ is transitive (respectively, weakly mixing, mixing, chain transitive, chain mixing), then $F$ is transitive (respectively, weakly mixing, mixing, chain transitive, chain mixing).
\end{theorem}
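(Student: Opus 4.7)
The plan is to exploit the very rigid structure of the iterates of $F$ when $f_1 \equiv c$. A direct induction using $F(K) = \{c\} \cup f_2(K)$ on $\mathbb{K}(X)$ shows that, for every $n \geq 1$ and every $x \in X$,
$$F^n(x) = \{f_2^n(x)\} \cup C_{n-1}, \qquad C_{n-1} := \{c, f_2(c), \ldots, f_2^{n-1}(c)\}.$$
Consequently $Ran(F)$ consists precisely of sets of this form, and the map $\phi_n : X \to \mathbb{K}(X)$, $\phi_n(x) = F^n(x)$, is continuous, since $f_2^n$ is continuous and $K \mapsto K \cup C_{n-1}$ is $1$-Lipschitz in the Hausdorff metric. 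This identity is the bridge allowing every condition on $F$ to be tested against the corresponding condition on $f_2$.

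For chain transitivity (respectively chain mixing): given $x \in X$, a target $A = \{f_2^m(y)\} \cup C_{m-1} \in Ran(F)$, and $\delta > 0$, I would invoke chain transitivity (respectively chain mixing) of $f_2$ to produce a $\delta$-chain $z_0 = x, z_1, \ldots, z_m = f_2^m(y)$ in $X$ of length exactly $m$. Setting $A_0 := \{x\}$ and $A_i := \{z_i\} \cup C_{i-1}$ for $1 \leq i \leq m$, the identity $F(A_i) = \{f_2(z_i)\} \cup C_i$ gives
$$d_H(F(A_i), A_{i+1}) \leq d(f_2(z_i), z_{i+1}) \leq \delta,$$
together with $A_m = A$, which is the desired $\delta$-chain for $F$.

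For transitivity (respectively weakly mixing, mixing): given nonempty open $U \subset X$ and $\mathcal{U} \subset Ran(F)$, I would pick $A = F^m(y) \in \mathcal{U}$ and $\eta > 0$ with $\{K \in Ran(F) : d_H(K,A) < \eta\} \subset \mathcal{U}$, use continuity of $\phi_m$ to get an open $V \ni y$ in $X$ with $\phi_m(V)$ inside that $\eta$-ball, and then apply the corresponding property of $f_2$ to $U$ and $V$ to produce $u \in U$ and $k \geq 1$ (respectively all $k$ past some threshold) with $f_2^k(u) \in V$, so that $F^m(f_2^k(u)) \in \mathcal{U}$.

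The main obstacle will be matching iterate counts. For chain transitivity, chain transitivity of $f_2$ yields a chain of \emph{some} length, whereas the construction above requires length exactly $m$; a short padding argument (e.g.\ appending a $\delta$-loop at $f_2^m(y)$ of appropriate length, again available from chain transitivity) will be needed to adjust. For the topological properties, the natural construction produces $F^m(f_2^k(u)) \in \mathcal{U}$ rather than $F^n(u) \in \mathcal{U}$ for some $n \in \mathbb{Z^+}$; the two sets differ precisely by the extra orbit points $f_2^m(c), \ldots, f_2^{m+k-1}(c)$ present in $F^{m+k}(u) = \{f_2^{m+k}(u)\} \cup C_{m+k-1}$, and the heart of the argument will be to ensure these additional points do not push $F^{m+k}(u)$ out of the $\eta$-neighborhood of $A$, either by a careful choice of $k$ or by exploiting the freedom in the representation $A = F^m(y)$.
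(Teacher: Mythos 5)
Your structural identity $F^n(x)=\{f_2^n(x)\}\cup C_{n-1}$ with $C_{n-1}=\{c,f_2(c),\dots,f_2^{n-1}(c)\}$ is correct, and it is in fact more careful than the paper's own proof, which asserts $Ran(F)=\{\{c,f_2^n(x)\}\mid n\geq 1,\ x\in X\}$ and works with $F^n(u)=\{c,f_2^n(u)\}$ throughout --- an identification that is valid only under the additional hypothesis $f_2(c)=c$ (which the paper imposes in its earlier shadowing theorem but silently drops here). The problem is that the ``main obstacle'' you flag at the end is not a technicality to be patched; it is fatal as the statement stands. Since $C_{n-1}$ grows with $n$, any length-$n$ chain or orbit segment of $F$ necessarily carries (up to accumulated error) the debris $c,f_2(c),\dots,f_2^{n-1}(c)$, so to land on or Hausdorff-approximate the fixed finite set $A=\{f_2^m(y)\}\cup C_{m-1}$ you must take $n$ essentially equal to $m$ --- a prescribed value that chain transitivity (which yields a chain of \emph{some} length; your loop-padding only realizes lengths $n+jp$ for loop periods $p$, so it cannot hit an arbitrary $m$), transitivity, and even mixing of $f_2$ (which yields all \emph{sufficiently large} $n$, while $m$ may be small) all fail to deliver. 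Concretely, let $f_2$ be an irrational rotation of the circle by $\alpha$, $f_1\equiv c$, and $A=F(y)=\{c,f_2(y)\}$ with $d(y,c)$ and $\alpha$ both larger than $2\delta$: every $A_j$ with $j\geq 1$ in a $\delta$-chain of $F$ contains a point within $\delta$ of $c$, hence every $A_j$ with $j\geq 2$ contains a point within $2\delta$ of $f_2(c)$, which $A$ does not; and a length-one chain forces $d(x,y)\leq\delta$. So $F$ is not chain transitive (nor transitive) although $f_2$ is minimal, and no choice of $k$ or of representation of $A$ can rescue your final step.

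The repair is to add the hypothesis $f_2(c)=c$. Then $C_{n-1}=\{c\}$ for all $n$, every length-matching issue evaporates, and your two constructions --- the chain $A_i=\{z_i,c\}$ for the chain-theoretic properties, and the transfer of $\mathcal{U}$ to the open set $\{w\in X:\{c,w\}\in\mathcal{U}\}$ for the topological ones --- collapse precisely to the argument the paper gives. So you have the right lemma and an honestly identified gap, but the gap cannot be closed for the theorem as stated; it closes only by strengthening the hypothesis (and your computation of $F^n(x)$ is exactly what exposes this).
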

\begin{proof}
Let $f_1(x)=c$ ($\forall x\in X$, $c$ is a constant). Then it is esay to see that $Ran(F)=\{\{c,f_2^n(x)\}\mid  n\geq 1, x\in X\}$. 
\begin{itemize}
	\item [(1)]Transitive: Let $U\subset X$ and $\mathcal{U}\subset Ran(F)$ be two nonempty open sets. Then there exists nonempty open set $V\subset X$ such that $\{\{c,v\}\mid  v\in V\}\subset\mathcal{U}$. Since $f_2$ is transitive, there exists $n\in\mathbb{Z^+}$ such that $f_2^n(U)\bigcap V\neq\emptyset$. Then there exists $u\in U$ such that $f_2^n(u)\in V$. Hence $F^n(u)\in\mathcal{U}$. So, $F$ is transitive.
	
	\item[(2)]Weakly mixing: Let $U,V\subset X$ and $\mathcal{U},\mathcal{V}\subset Ran(F)$ be nonempty open sets. Then there exists nonempty open set $U_1,V_1\subset X$ such that $\{\{c,x\}\mid  x\in U_1\}\subset\mathcal{U}$ and $\{\{c,x\}\mid  x\in V_1\}\subset\mathcal{V}$. Since $f_2$ is weakly mixing, there exists $n\in\mathbb{Z^+}$ such that $f_2^n(U)\bigcap U_1\neq\emptyset$ and $f_2^n(V)\bigcap V_1\neq\emptyset$. Then there exist $u\in U$ and $v\in V$ such that $f_2^n(u)\in U_1$ and $f_2^n(v)\in V_1$. Hence, $F^n(u)\in\mathcal{U}$ and $F^n(v)\in\mathcal{V}$. So, $F$ is weakly mixing. 
	
	\item[(3)]Mixing: Let $U\subset X$ and $\mathcal{U}\subset Ran(F)$ be two nonempty open sets. Then there exists nonempty open set $V\subset X$ such that $\{\{c,v\}\mid  v\in V\}\subset\mathcal{U}$. Since $f_2$ is mixing, there exists $N>0$ such that there exists $u\in U$ such that $f_2^n(u)\in V$ for any $n\geq N$. Hence $F^n(u)\in\mathcal{U}$ for any $n\geq N$. So, $F$ is mixing.
	
	\item[(4)]Chain transitive: Let $x\in X$, $A\in Ran(F)$ and $\delta>0$. Then $A=\{c,f_2^l(a)\}$ for some $l>0$ and some $a\in X$. Since $f_2$ is chain transitive, there is a $\delta$-chain from $x$ to $f_2^l(a)$. Denote this $\delta$-chain by $\{x_,x_1,x_2,\cdots,x_n,f_2^l(a)\}$. Then $$\{x,\{x_1,c\},\{x_2,c\},\cdots,\{x_n,c\},A\}$$ must be a $\delta$-chain from $x$ to $A$.
	
	\item[(5)]Chain mixing: Let $x\in X$, $\{c,f_2^l(a)\}\in Ran(F)$ and $\delta>0$, where $l>0$, $a\in X$. Since $f_2$ is chain mixing, there is $N>0$ such that one can find a $\delta$-chain from $x$ to $f_2^l(a)$ of length $n$ for any $n\geq N$. Then there  must be a $\delta$-chain from $x$ to $\{c,f_2^l(a)\}$ of length $n$ for any $n\geq N$. So, $F$ is chain mixing.
\end{itemize}
\end{proof}

\section{Iterability of shadowing and average shadowing properties}\label{sec4}
Both shadowing and average shadowing properties are invariant under iterations of continuous self-maps on a compact metric space\cite{aoki1994,niu2011}. Inspired by this, it is natural for us to ask: Do the two properties have similar conclusions under the action of multiple mappings? Now we prove that the answer to this question is affirmative, but it is important to note that the proof process is not identical.
\begin{theorem}
The following statements are equivalent:
\begin{itemize}
	\item [(1)]$F$ has shadowing property.
	\item[(2)]$F^k$ has shadowing property for any $k>1$.
	\item[(3)]$F^k$ has shadowing property for some $k>1$.
\end{itemize}
\end{theorem}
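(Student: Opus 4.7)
The plan is to adapt the classical proof that shadowing is invariant under iteration (as in \cite{aoki1994}) to the Hausdorff-metric setting on $\mathbb{K}(X)$. The implication (2)$\Rightarrow$(3) is immediate, so the real work is in (1)$\Rightarrow$(2) and (3)$\Rightarrow$(1). In the first direction I would refine a pseudo orbit of $F^k$ into one of $F$ by inserting exact forward images; in the second I would thin out a pseudo orbit of $F$ by sampling at multiples of $k$. The subtlety, flagged by the author, is that the thinning step requires controlling the loss of precision when iterating $\widetilde{F}$.

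For (1)$\Rightarrow$(2): given $\epsilon>0$, let $\delta>0$ witness shadowing of $F$ for $\epsilon$. Starting from a $\delta$-pseudo orbit $\{A_n\}_{n\ge 0}$ of $F^k$ I would build $\{B_m\}_{m\ge 0}$ by declaring $B_{kn+j}=F^j(A_n)$ for $0\le j<k$ and $B_{k(n+1)}=A_{n+1}$. Then $\widetilde{F}(B_{kn+j})=B_{kn+j+1}$ exactly for $0\le j\le k-2$, and the only nontrivial transition satisfies $d_H(\widetilde{F}(B_{kn+k-1}),B_{k(n+1)})=d_H(F^k(A_n),A_{n+1})\le\delta$. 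Since $B_0=A_0$ is still a singleton, $\{B_m\}$ is a legitimate $\delta$-pseudo orbit of $F$ in the sense of Definition~\ref{pfp}, so some $y\in X$ $\epsilon$-shadows it, and in particular $d_H((F^k)^n(y),A_n)=d_H(F^{kn}(y),B_{kn})<\epsilon$ for every $n$.

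For (3)$\Rightarrow$(1) I would take $B_n=A_{kn}$. Given $\epsilon>0$, I would first use the uniform continuity of $\widetilde{F},\widetilde{F}^2,\dots,\widetilde{F}^{k-1}$ on the compact space $(\mathbb{K}(X),d_H)$ to pick $\delta_1>0$ so that $d_H(A,B)<\delta_1$ implies $d_H(F^j(A),F^j(B))<\epsilon/2$ for every $0\le j\le k-1$; then apply the hypothesis to get $\delta_2>0$ from shadowing of $F^k$ at level $\delta_1$; then fix a common modulus of continuity $\omega$ for those iterates and take $\delta>0$ with $k\,\omega(\delta)<\min\{\delta_2,\epsilon/2\}$. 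A telescoping estimate $d_H(F^k(A_{kn}),A_{k(n+1)})\le\sum_{i=0}^{k-1}d_H(F^{k-i}(A_{kn+i}),F^{k-i-1}(A_{kn+i+1}))\le k\,\omega(\delta)$ then shows $\{A_{kn}\}$ is a $\delta_2$-pseudo orbit of $F^k$, so some $y\in X$ satisfies $d_H((F^k)^n(y),A_{kn})<\delta_1$. For arbitrary $m=kn+j$ with $0\le j<k$, the triangle inequality $d_H(F^m(y),A_m)\le d_H(F^j(F^{kn}(y)),F^j(A_{kn}))+d_H(F^j(A_{kn}),A_{kn+j})$ bounds the first term by $\epsilon/2$ via $\delta_1$ and the second by $\epsilon/2$ via a telescoping sum of length at most $k$.

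The main obstacle, compared with the single-map proof, is exactly that $\widetilde{F}^j$ is not an isometry on $\mathbb{K}(X)$: small perturbations in $d_H$ can grow under iteration, so one cannot just chain inequalities. The whole argument rests on committing in advance to a uniform modulus of continuity for $\widetilde{F},\dots,\widetilde{F}^{k-1}$ and paying for the $k$-fold accumulation of errors by shrinking the input $\delta$ accordingly. The singleton requirement on $A_0$ in Definition~\ref{pfp} also has to be respected at each transfer between the two time scales, but this is automatic once we set $B_0=A_0$ in both directions.
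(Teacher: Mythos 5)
Your proposal is correct and follows essentially the same route as the paper: for $(1)\Rightarrow(2)$ you refine the $F^k$-pseudo orbit by inserting exact images $F^j(A_n)$, and for $(3)\Rightarrow(1)$ you sample at multiples of $k$, use a telescoping estimate with a uniform modulus of continuity for $\widetilde{F},\dots,\widetilde{F}^{k-1}$ to show the sampled sequence is a pseudo orbit of $F^k$, and then fill in the intermediate times. The only cosmetic difference is that the paper delegates the final ``points $\gamma$-close to the start of a $\gamma$-chain stay $\epsilon$-close along it'' step to a cited lemma, whereas you prove it directly via the triangle inequality and the same uniform continuity; the content is identical.
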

\begin{proof}
$(1)\Rightarrow(2):$ Let $k>1$, $\epsilon>0$ and $\delta>0$ be such that $F$ has shadowing property witnessed by $\epsilon$. Let $\{A_n\}_{n\geq 0}$ be a $\delta$-pseudo orbit of $F^k$. Then for any $n\in\mathbb{N}$, $d_H(F^k(A_n),A_{n+1})<\delta$. Define $\{B_n\}_{n\geq 0}$ as $B_{mk+r}=F^r(A_m)$ for any $m\geq 0$ and any $0\leq r<k$. That is $$\{B_n\}_{n\geq 0}=\{A_0,F(A_0),\cdots,F^{k-1}(A_0),A_1,F(A_1),\cdots,F^{k-1}(A_1),A_2,\cdots,\}.$$ It is a $\delta$-pseudo orbit of $F$. Then there exists $z\in X$ such that for any $n\in\mathbb{N}$, $d_H(F^n(z),B_n)<\epsilon$. Thus $$d_H(F^{nk}(z),A_n)<\epsilon,\forall n\in\mathbb{N}.$$ So, $F^k$ has shadowing property.

$(2)\Rightarrow(3)$ is obvious.

$(3)\Rightarrow(1):$  Let $\epsilon>0$. Since $\widetilde{F}:\mathbb{K}(X)\rightarrow\mathbb{K}(X)$ is continuous and $\mathbb{K}(X)$ is compact with $d_H$, then by [\cite{lem}, Lemma 1.2] there exists $0<\gamma<\frac{\epsilon}{2}$ such that for arbitrary length $\gamma$-chain of length $k+1$ and any $z\in B_d(A_0,\gamma)$, $$d_H(F^i(z),A_i)<\epsilon,\forall 0\leq i\leq k.$$ Let $0<\eta<\gamma$ be such that $F^k$ has shadowing property witnessed by $\gamma$. Since $F$ is continuous and $X$ is compact, there exists $0<\delta<\min\{\frac{\eta}{k+1},\gamma\}$ such that $d(u,v)<\delta$ implies $$d_H(F^s(u),F^s(v))<\frac{\eta}{k+1},\forall 0\leq s\leq k.$$ 

Now we show $\delta$ makes $F$ have shadowing property witnessed by $\epsilon$.

Let $\{A_i\}_{i\geq 0}$ be a $\delta$-pseudo orbit of $F$. Then we show $\{A_{ik}\}_{i\geq 0}$ be a $\eta$-pseudo orbit of $F^k$. For any $i\geq 0$, 
\begin{equation*}
	\left\{
	\begin{aligned}
		& d_H(F(A_{ik}),A_{ik+1})<\delta\\
		& d_H(F(A_{ik+1}),A_{ik+2})<\delta \\
		& d_H(F(A_{ik+2}),A_{ik+3})<\delta\\
		&\cdots\\
		& d_H(F(A_{ik+k-2}),A_{ik+k-1})<\delta\\
		& d_H(F(A_{ik+k-1}),A_{ik+k})<\delta\\
	\end{aligned}
	\right.
\end{equation*}
Then,
\begin{equation*}
	\left\{
	\begin{aligned}
		& d_H(F^k(A_{ik}),F^{k-1}(A_{ik+1}))<\frac{\eta}{k+1}\\
		& d_H(F^{k-1}(A_{ik+1}),F^{k-2}(A_{ik+2}))<\frac{\eta}{k+1}\\
		& d_H(F^{k-2}(A_{ik+2}),F^{k-3}(A_{ik+3}))<\frac{\eta}{k+1}\\
		&\cdots\\
		& d_H(F^2(A_{ik++k-2}),F(A_{ik+k-1}))<\frac{\eta}{k+1}\\
		& d_H(F(A_{ik+k-1}),A_{ik+k})<\delta<\frac{\eta}{k+1}\\
	\end{aligned}
	\right.
\end{equation*}
Thus $$d_H(F^k(A_{ik}),A_{ik+k})<\frac{k\eta}{k+1}<\eta.$$ That is $\{A_{ik}\}_{i\geq 0}$ is a $\eta$-pseudo orbit of $F^k$. Then there exists $z\in X$ such that $$d_H(F^{nk}(z),A_{nk})<\gamma,\forall n\geq 0.$$
Since $\{A_{nk},A_{nk+1},\cdots,A_{nk+k+1},A_{nk+k}\}$ is a $\delta$-chain of $F$ and $\delta<\gamma$, then it is also a $\gamma$-chain of $F$ and $F^{nk}(z)\in B_{d_H}(A_{nk},\gamma)$. Thus $$d_H(F^{i+nk}(z),A_{i+nk})<\epsilon,\forall n\geq0, \forall 0\leq i\leq k.$$ That is, 
$$d_H(F^{n}(z),A_{n})<\epsilon,\forall n\geq0.$$ So, $F$ has shadowing property.
\end{proof}
\begin{theorem}
If $F$ has average shadowing property, then so does $F^k$ for any $k>1$
\end{theorem}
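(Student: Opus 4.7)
The strategy is to adapt the lifting/projecting idea of the preceding theorem's $(1)\Rightarrow(2)$ to the averaged setting. Given a $\delta$-average-pseudo-orbit $\{A_n\}_{n\geq0}$ of $F^k$, I would construct a candidate average-pseudo-orbit $\{B_m\}_{m\geq0}$ of $F$ by interleaving intermediate iterates: $B_{jk+r}:=F^r(A_j)$ for $j\geq0$ and $0\leq r<k$. Then $B_0=A_0$ is a singleton (as required) and the original points are recovered on the sub-lattice $B_{jk}=A_j$. Applying the average shadowing of $F$ to $\{B_m\}$ yields a shadowing point $y\in X$, and discarding the non-sub-lattice indices (whose contributions are nonnegative) should show that the same $y$ averages-shadows $\{A_n\}$ under $F^k$.

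The key computation is that $\{B_m\}$ is in fact a $\delta'$-average-pseudo-orbit of $F$ for suitable $\delta'$. Observe $d_H(F(B_m),B_{m+1})=d_H(F^{r+1}(A_j),F^{r+1}(A_j))=0$ unless $m\equiv k-1\pmod k$, in which case the term equals $d_H(F^k(A_j),A_{j+1})$ with $m=jk+k-1$. Hence for any shift $k''\in\mathbb{Z}^+$ the partial sum $\sum_{i=0}^{m-1}d_H(F(B_{i+k''}),B_{i+k''+1})$ picks out a consecutive block $\sum_{\ell=j_0}^{j_0+n_m-1}d_H(F^k(A_\ell),A_{\ell+1})$ with $n_m=\lfloor m/k\rfloor$ (up to one boundary term) and $j_0$ determined by $k''$. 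Invoking the $\delta$-average-pseudo-orbit hypothesis on $\{A_n\}$ (shifting to ensure $j_0\geq1$ and paying at most one $\operatorname{diam}(X)$ term if $j_0=0$) bounds this block by $n_m\delta+O(1)$, so the $F$-average is at most $\delta/k+o(1)$ as $m\to\infty$. Choosing $\delta$ with $\delta/k<\delta'$ and $m$ past a threshold absorbing the $O(1)$ boundary term makes $\{B_m\}$ a $\delta'$-average-pseudo-orbit of $F$.

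For the reverse projection, I would set $\epsilon':=\epsilon/k$ at the outset and pick $\delta'>0$ from the average shadowing of $F$ with tolerance $\epsilon'$. The resulting $y\in X$ satisfies $\limsup_{m\to\infty}\frac{1}{m}\sum_{j=0}^{m-1}d_H(F^j(y),B_j)<\epsilon'$. Retaining only the indices $j=ik$ (all other terms are nonnegative) gives $\sum_{i=0}^{n-1}d_H(F^{ik}(y),A_i)\leq\sum_{j=0}^{nk-1}d_H(F^j(y),B_j)$, whence $\limsup_{n\to\infty}\frac{1}{n}\sum_{i=0}^{n-1}d_H((F^k)^i(y),A_i)\leq k\epsilon'=\epsilon$, so $F^k$ has average shadowing.

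I expect the main obstacle to be the uniformity in the shift $k''$ in the middle step: the starting index $j_0$ of the surviving block depends on $k''$ and may equal $0$, a case not covered by the shift range $k'\in\mathbb{Z}^+$ in the hypothesis on $\{A_n\}$. One must isolate this single boundary term (bounded by $\operatorname{diam}(X)$) so that it is absorbed into the $o(1)$ error as $m\to\infty$, without spoiling the overall $\delta/k$ bound; everything else is routine index bookkeeping.
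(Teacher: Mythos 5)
Your proposal is correct and follows essentially the same route as the paper: interleave the given $F^k$-average-pseudo-orbit with the intermediate iterates $F^r(A_j)$, verify that the interleaved sequence is an average-pseudo-orbit of $F$ (only the terms at indices $\equiv k-1 \pmod k$ are nonzero), apply the average shadowing of $F$ at tolerance $\epsilon/k$, and project back to the sub-lattice of multiples of $k$. If anything, your projection step --- bounding the sub-lattice partial sums by the full partial sums and multiplying by $k$ --- is cleaner and more rigorous than the paper's contradiction argument, which as written only controls infinitely many of the averages (a $\liminf$ statement) before asserting the $\limsup$ bound.
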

\begin{proof}
Let $k>1$ and $\epsilon>0$. Then there exists $\delta>0$ such that $F$ has average shadowing property witnessed by $\frac{\epsilon}{k}$. Let $\{B_i\}_{i\geq 0}$ be a $\delta$-average-pseudo-orbit of $F^k$. Then there exists $N=N(\delta)>0$ such that for any $n\geq N$ and any $m\in\mathbb{Z^+}$, $$\frac{1}{n}\sum^{n-1}_{i=0}d_H(F^k(A_{i+m}),A_{i+m+1})<\delta.$$
Let $A_{nk+j}=F^j(B_n)$, $\forall 0\leq j<k, n\geq 1$. Then $$\{A_i\}_{i\geq 0}=\{A_0,F(A_0),\cdots,F^{k-1}(A_0),A_1,F(A_1),\cdots,F^{k-1}(A_1),A_2,\cdots\}.$$
Then for any $n\geq N$ and any $m\in\mathbb{Z^+}$, $$\frac{1}{n}\sum^{n-1}_{i=0}d_H(F(A_{i+m}),A_{i+m+1})<\delta.$$
Hence, $\{A_i\}_{i\geq0}$ is a $\delta$-average-pseudo-orbit of $F$. Since $F$ has average shadowing property, there exists $z\in X$ such that 
\begin{equation}\label{eee1}
	\limsup_{n\rightarrow\infty}\frac{1}{n}\sum_{i=0}^{n-1}d_H(F^i(z),A_i)<\frac{\epsilon}{k}.
\end{equation}

Suppose that there exists $N_0\geq0$ such that $\frac{1}{n}\sum_{i=0}^{n-1}d_H(F^{ik}(z),A_{ik})\geq\epsilon$ for any $n\geq N_0$. Then 
$$\limsup_{n\rightarrow\infty}\frac{1}{n}\sum_{i=0}^{n-1}d_H(F^i(z),A_i)\geq\frac{\epsilon}{k},$$
which is a contradiction with (\ref{eee1}). Hence there exist infinitely many $n\in\mathbb{N}$ such that $$\frac{1}{n}\sum_{i=0}^{n-1}d_H(F^{ik}(z),A_{ik})<\epsilon.$$ Therefore, $$	\limsup_{n\rightarrow\infty}\frac{1}{n}\sum_{i=0}^{n-1}d_H(F^{ki}(z),B_{i})<\frac{\epsilon}{k}.$$ So, $F^k$ has average shadowing property.
%
%
%

\end{proof}
\section{Transitive properties}\label{sec5}
Abbas \cite{abbas5} claimed that if $f$ has shadowing property then $f$ is chain mixing is equivalent to $f$ being mixing. Alireza \cite{alireza} proved that if the iterated function systems with one of the continuous self-maps being surjective having average shadowing property then it is chain transitive. Now we show that there are similar conclusions for multiple mappings as well. 
\begin{theorem}
Suppose that $F$ has shadowing property. $F$ is chain mixing if and only if $F$ is mixing.
\end{theorem}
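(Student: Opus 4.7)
The plan is to prove the two implications separately, with only the direction chain mixing $\Rightarrow$ mixing relying on the shadowing hypothesis.

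First, for mixing $\Rightarrow$ chain mixing, I would fix $x \in X$, $A \in Ran(F)$ and $\delta > 0$. By continuity of $f_1$ and $f_2$, I can choose $\eta > 0$ so that $d(x,y) < \eta$ forces $d_H(F(x), F(y)) < \delta$. I would then apply mixing to the nonempty open sets $U := B(x,\eta) \subset X$ and $\mathcal{U} := \{B \in Ran(F) : d_H(B,A) < \delta\} \subset Ran(F)$, obtaining $N$ such that for every $n \geq N$ there exists $u_n \in U$ with $F^n(u_n) \in \mathcal{U}$. The sequence $\{x\}, F(u_n), F^2(u_n), \ldots, F^{n-1}(u_n), A$ is then a $\delta$-chain from $x$ to $A$ of length $n$: the first gap equals $d_H(F(x), F(u_n)) < \delta$, the intermediate gaps vanish identically, and the final gap equals $d_H(F^n(u_n), A) < \delta$.

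For the converse chain mixing $\Rightarrow$ mixing, which is the substantive direction and where shadowing is essential, let $U \subset X$ and $\mathcal{U} \subset Ran(F)$ be nonempty open sets. I would pick $x \in U$ and $A \in \mathcal{U}$, and choose $\epsilon > 0$ small enough that $B(x,\epsilon) \subset U$ and $\{B \in Ran(F) : d_H(B,A) < \epsilon\} \subset \mathcal{U}$. Let $\delta > 0$ be a shadowing constant for $\epsilon$. Chain mixing supplies some $N$ such that for each $n \geq N$ there is a $\delta$-chain $\{x\} = A_0^{(n)}, A_1^{(n)}, \ldots, A_n^{(n)} = A$ from $x$ to $A$ of length $n$. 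I would extend each such finite chain to an infinite $\delta$-pseudo orbit by setting $A_{n+k}^{(n)} := F^k(A)$ for $k \geq 1$, since these appended steps contribute zero chain error. Applying the shadowing property yields $y_n \in X$ with $d_H(F^k(y_n), A_k^{(n)}) < \epsilon$ for all $k \geq 0$; reading off $k = 0$ gives $y_n \in B(x,\epsilon) \subset U$, and reading off $k = n$ gives $F^n(y_n) \in \mathcal{U}$. Therefore $\{F^n(u) : u \in U\} \cap \mathcal{U} \neq \emptyset$ for every $n \geq N$, proving that $F$ is mixing.

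The main obstacle I anticipate is keeping the set-valued framework consistent with Definition \ref{pfp}: a $\delta$-pseudo orbit is required to begin with a singleton, and this is exactly why I anchor each constructed chain at $\{x\}$ rather than at some larger set near $x$. The extension $A_{n+k} = F^k(A)$ legitimately produces elements of $\mathbb{K}(X)$ that need not be singletons, but this is allowed since the singleton constraint is only on $A_0$. A secondary point is the convention for the length of a chain; since chain mixing provides chains of every sufficiently large length, any off-by-one issue is absorbed harmlessly into the threshold $N$.
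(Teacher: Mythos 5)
Your proposal is correct and follows essentially the same route as the paper's proof: fix $x\in U$, $A\in\mathcal{U}$, an $\epsilon$ with $B_d(x,\epsilon)\subset U$ and $B_{d_H}(A,\epsilon)\subset\mathcal{U}$, take a shadowing modulus $\delta$ for $\epsilon$, and shadow the $\delta$-chains of every length $n\geq N$ supplied by chain mixing to land in $U$ at time $0$ and in $\mathcal{U}$ at time $n$. You are in fact more careful than the paper on two points it glosses over --- you prove the ``obvious'' direction explicitly and you extend each finite chain to an infinite $\delta$-pseudo orbit (anchored at the singleton $\{x\}$) before invoking shadowing --- but these are refinements of the same argument, not a different approach.
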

\begin{proof}
If $F$ is mixing, then it is obvious that $F$ is chain mixing. Let $F$ be chain mixing, then we show $F$ is mixing.

For any nonempty open sets $U\subset X$ and $\mathcal{U}\subset Ran(F)$, select $x\in U$, $A\in\mathcal{U}$ and $\epsilon>0$ satisfying $B_d(x,\epsilon)\subset U$ and $B_{d_H}(A,\epsilon)\subset\mathcal{U}$. Let $\delta>0$ be an $\epsilon$ modulus of shadowing property for $F$. Then there exists $N\in\mathbb{Z^+}$ such that for any $n\geq N$, there is a $\delta$-chain from $x$ to $A$ of length $n$. Since $F$ has shadowing property, one can find $z\in B_d(x,\epsilon)\subset U$ such that $F^n(z)\in B_{d_H}(A,\epsilon)\subset\mathcal{U}$. $\{z,F(z),F^2(z),\cdots,F^n(z)\}$ is an orbit of length $n$ begins in $U$ and ends in $\mathcal{U}$. So $F$ is mixing. 
\end{proof}

\begin{theorem}\label{Fac}
If $F$ has average shadowing property, then $F$ is chain transitive.
\end{theorem}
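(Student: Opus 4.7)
The plan is to adapt the one-jump pseudo-orbit trick (in the spirit of Alireza's result on iterated function systems) to the multiple-mapping setting: build a $\delta$-chain from $x$ to $A = F^l(a)$ by concatenating three exact $F$-orbit segments---an initial piece of the orbit of $x$, a middle piece of the orbit of some average-shadowing point $z$, and a final piece of the orbit of $a$ terminating at $F^l(a) = A$---joined by two controlled ``jumps'' each of Hausdorff size less than $\delta$.

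First, by the uniform continuity of $\widetilde{F}$ on the compact space $(\mathbb{K}(X), d_H)$, I would choose $\alpha \in (0, \delta)$ such that $d_H(K_1, K_2) < \alpha$ implies $d_H(\widetilde{F}(K_1), \widetilde{F}(K_2)) < \delta$. Using the average shadowing hypothesis, pick $\eta > 0$ such that every $\eta$-average-pseudo-orbit of $F$ is $(\alpha/4)$-average-shadowed. Writing $A = F^l(a)$, take $n$ large and set
\[
B_0 = \{x\}, \quad B_i = F^i(x) \;(1 \leq i \leq n), \quad B_{n+1+j} = F^j(a) \;(j \geq 0),
\]
so that $B_{n+1+l} = A$. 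Every transition is exact except at position $n$, where $d_H(F(B_n), B_{n+1}) \leq \mathrm{diam}(\mathbb{K}(X))$; the window-average over any window of length $m$ is at most $\mathrm{diam}(\mathbb{K}(X))/m$, so taking $m$ above a uniform threshold makes this an $\eta$-average-pseudo-orbit. Applying average shadowing yields $z \in X$ with $\limsup_{m \to \infty} \frac{1}{m} \sum_{i=0}^{m-1} d_H(F^i(z), B_i) < \alpha/4$.

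A pigeonhole/density argument on this $\limsup$ bound then produces indices $i_1 \in [0, n]$ and $i_2 \in (n, n+l+1]$ with $d_H(F^{i_j}(z), B_{i_j}) < \alpha$ for $j = 1,2$. The desired chain is then
\[
\{x\}, F(x), \ldots, F^{i_1}(x), F^{i_1+1}(z), \ldots, F^{i_2}(z), F^{i_2-n}(a), F^{i_2-n+1}(a), \ldots, F^l(a) = A,
\]
with the convention that if $i_2 = n+l+1$ one stops at $A$ directly after $F^{n+l}(z)$. Transitions inside each of the three blocks are exact; the two joints at positions $i_1$ and $i_2$ amount to applying $\widetilde{F}$ to two sets that are $\alpha$-close in $d_H$, hence produce Hausdorff discrepancy strictly less than $\delta$ by the choice of $\alpha$.

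The hard part will be the pigeonhole step: since $l$ is fixed by $A$ while $n$ must be taken large relative to $1/\eta$, one cannot in general guarantee a good index in the short window $(n, n+l+1]$ of length $l+1$ using only the density estimate coming from $\limsup < \alpha/4$. To make this step rigorous I would either refine the pseudo-orbit so that the block $a, F(a), \ldots, F^l(a)$ recurs periodically after the $x$-block (forcing positions occupied by $A$ to have positive density), or tighten the relationship between $\eta$ and $\alpha$ when first invoking average shadowing so that the good-position density exceeds $1 - 1/(n+l+2)$. This is the analogue, in our set-valued setting, of Alireza's use of surjectivity: the identity $A = \widetilde{F}(F^{l-1}(a))$ provides the ``preimage'' of $A$ needed to close the chain exactly at $A$, bypassing any surjectivity assumption on the individual maps $f_i$.
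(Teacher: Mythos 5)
Your overall strategy---concatenate exact orbit segments into an average pseudo-orbit, apply average shadowing to obtain a tracing point $z$, locate good indices by a density (Markov) argument, and splice the chain at those indices using uniform continuity of $\widetilde{F}$ on $(\mathbb{K}(X),d_H)$---is the same as the paper's, and your handling of the two joints is sound. The difference, and the genuine gap, lies in how the orbit of $z$ is forced close to a set from which $A$ can be reached by an \emph{exact} orbit segment. You correctly flag this as ``the hard part,'' but neither of your repairs closes it. The forward block $\{a\},F(a),\dots,F^l(a)$ has fixed length $l+1$ determined by $A$, while any period $P$ with which you make it recur faces a dichotomy: if $P\lesssim D/\eta$ (where $D=\mathrm{diam}\,\mathbb{K}(X)$), each period contains a jump of size up to $D$ and the window-averages are of order $D/P$, so the sequence fails to be an $\eta$-average-pseudo-orbit; if $P\gtrsim D/\eta$, the positions lying in the $a$-block have density at most $(l+1)\eta/D$, which for small $l$ is far below the bad-set density bound $1/4$ that Markov's inequality extracts from $\limsup_{m}\frac1m\sum_{i<m}d_H(F^i(z),B_i)<\alpha/4$, so no good index inside the $a$-block is guaranteed. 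Your second repair is circular: $\eta=\eta(\alpha)$ is produced by the average shadowing hypothesis and $n$ must then be taken large relative to $1/\eta$, so you cannot afterwards shrink $\alpha$ to meet a density threshold that depends on $n$.

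The paper escapes this by replacing your forward block with a \emph{backward} orbit $A_{N-2},\dots,A_1,A$ of $A$ under $\widetilde{F}$, whose length $N-1$ is chosen \emph{after} $\delta$ and is comparable to the length of the $x$-block; the target block then occupies a fraction of each period bounded below independently of $l$, $\eta$ and $\alpha$, and the pigeonhole goes through. This is exactly the point your last sentence tries to sidestep: the identity $A=\widetilde{F}(F^{l-1}(a))$ supplies a backward orbit of length only $l$, whereas the argument needs one of length comparable to the period, which in general requires a surjectivity-type hypothesis (as in Alireza's theorem for iterated function systems, and as the paper tacitly assumes when it asserts the existence of $A_{N-2},\dots,A_1$). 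So the gap you identified is real and is not closed by your proposal; any correct repair must make the $A$-reaching block's length grow with the period, which the forward-orbit-of-$a$ construction cannot do.
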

\begin{proof}
Let $\epsilon>0$, $x\in X$ and $A\in Ran(F)$. Since $F$ is continuous, there exists $0<\epsilon<\frac{\epsilon'}{2}$ such that $d(u,v)<2\epsilon'$ implies $d_H(F(u),F(v))<\epsilon$ for any $u,v\in X$ and $d_H(A,B)<2\epsilon'$ implies $d_H(F(A),F(B))<\epsilon$ for any $A,B\in\mathbb{K}(X)$. Select $\delta=\delta(\epsilon')$ such that $F$ has average shadowing property witnessed by $\epsilon'$. Take $N\in\mathbb{Z^+}$ such that $\frac{3D}{N}<\delta$ in which $D$ is the diameter of $X$. Then there exists $\{A_{N-2},A_{N-3},\cdots,A_1,A\}$ such that $F(A_i)=A_{i-1}$, $\forall N-2\leq i\leq 1$, where $A_0=A$. Let $$\{B_i\}_{i\geq0}=\{x,F(x),\cdots,F^N(x),A_{N-2},\cdots,A_1,A,x,F(x),\cdots,,A_1,A,x,,\cdots\}.$$ Then for any $n\geq N$ and any $k\in\mathbb{Z^+}$,$$\frac{1}{n}\sum_{i=0}^{n-1}d_H(F(B_{i+k}),B_{i+k+1})<\frac{1}{n}\frac{n}{N}3D\leq\frac{3D}{N}<\delta.$$
Thus, $\{B_i\}_{i\geq0}$ is a $\delta$-average-pseudo-orbit of $F$. Since $F$ has average shadowing property, there exists $z\in X$ such that $$\limsup_{n\rightarrow\infty}\frac{1}{n}\sum_{i=0}^{n-1}d_H(F^i(z),B_i)<\epsilon'.$$
Then there exist $i_0,l_0,j_1,j_2\in\mathbb{Z^+}$ with $i_0<l_0$ such that $$d_H(F^{i_0}(z),F^{j_1}(x))<2\epsilon'~and~d_H(F^{l_0}(z),A_{j_2})<2\epsilon'.$$ Thus $$\{x,F(x),\cdots,F^{j_1}(x),F^{i_0+1}(x),\cdots,F^{l_0-1}(x),A_{j_2},\cdots,A\}$$ is a $\epsilon$-chain from $x$ to $A$.
\end{proof}

Consider the multiple mappings $F$ of Example \ref{eab}. By Theorem \ref{Fac}, $F$ is chain transitive. It is easy to see that if $F$ is chain transitive and $F$ has shadowing property then $F$ is transitive. Then by Theorem \ref{Fac} we have the following corollary.
\begin{corollary}
If $F$ have both shadowing and average shadowing properties, then $F$ is transitive.
\end{corollary}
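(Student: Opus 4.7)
The plan is to chain together \textbf{Theorem \ref{Fac}} with the remark immediately preceding the corollary, so the proof is essentially a two-step argument. First I invoke the average shadowing hypothesis to get chain transitivity of $F$ directly from \textbf{Theorem \ref{Fac}}, and then I use the shadowing hypothesis to upgrade chain transitivity into genuine transitivity.

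The nontrivial step is verifying the remark that \emph{chain transitivity plus shadowing implies transitivity} in the set-valued sense of Definition \ref{newde2}. I would proceed as follows. Fix nonempty open sets $U\subset X$ and $\mathcal{U}\subset Ran(F)$. Choose $x\in U$ and $A\in\mathcal{U}$, and pick $\epsilon>0$ small enough that $B_d(x,\epsilon)\subset U$ and $B_{d_H}(A,\epsilon)\subset \mathcal{U}$ (the latter requiring one to argue that sufficiently small Hausdorff balls around $A$ stay inside the open set $\mathcal{U}\subset Ran(F)$, which is routine since $\mathcal{U}$ is open in the subspace topology induced by $d_H$). Let $\delta>0$ be a modulus of shadowing for $\epsilon$. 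By chain transitivity of $F$ there exists a $\delta$-chain $\{x=A_0,A_1,\ldots,A_{n-1},A_n=A\}$ from $x$ to $A$. Extend this finite $\delta$-chain to a full $\delta$-pseudo orbit of $F$ by appending, say, the forward orbit of $A$ under $F$ after position $n$. Applying shadowing yields $z\in X$ with $d_H(F^i(z),A_i)<\epsilon$ for all $i\geq 0$; in particular $d(z,x)=d_H(F^0(z),A_0)<\epsilon$, so $z\in U$, and $d_H(F^n(z),A)<\epsilon$, so $F^n(z)\in \mathcal{U}$. Thus $\{F^n(u)\mid u\in U\}\cap\mathcal{U}\neq\emptyset$, giving transitivity.

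The only place I expect any real care is the extension of a finite $\delta$-chain ending at $A$ into a bona fide $\delta$-pseudo orbit (so that Definition \ref{pfp} applies and shadowing can be invoked); appending $F(A),F^2(A),\ldots$ (where one picks any element of $Ran(F)$ compatible with $A$, or simply iterates the induced map $\widetilde{F}$) produces a sequence $\{A_n\}_{n\geq 0}$ with $d_H(F(A_n),A_{n+1})\leq\delta$ throughout. One also has to note that Definition \ref{pfp} requires $A_0$ to be a singleton, but since we set $A_0=\{x\}$ this is satisfied.

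Putting the two steps together: average shadowing gives chain transitivity by \textbf{Theorem \ref{Fac}}, and chain transitivity together with shadowing gives transitivity by the argument above. This establishes the corollary.
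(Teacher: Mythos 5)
Your proposal is correct and follows essentially the same route as the paper: invoke Theorem \ref{Fac} to get chain transitivity from average shadowing, then combine chain transitivity with shadowing to obtain transitivity (the paper states this second implication as an unproved remark just before the corollary). Your explicit verification of that remark — shadowing a $\delta$-chain from $x\in U$ to $A\in\mathcal{U}$, extended to a full pseudo orbit — mirrors the paper's own argument for Theorem 5.1 and fills in a detail the paper omits.
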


\section{Conclusions}\label{sec6}
This paper set out to study shadowing, average shadowing and transitive properties of multiple mappings from the perspective of a set-valued view. This perspective is different from the semigroup-theoretic view that has been previously studied in the context of dynamical systems of iterated function systems. We discussed the relationship between multiple mappings and its continuous self-maps in terms of the above properties.  It may not only deepen our understanding of continuous self-maps but also enable us to use relatively simple continuous self-maps to comprehend relatively complex multiple mappings. We show that for multiple mappings both shadowing and average shadowing properties are iterable, shadowing property plus chain mixing implies mixing and average shadowing properties implies chain transitivity. We hope that these conclusions can enrich the achievements in the field of fractal and provide some assistance for further in-depth research in this area. 
\section*{Acknowledgments}
We give our thanks to the referee for his (or her) careful reviewing which substantially improve the paper.  


\end{document}